\numberwithin{equation}{section}
\newtheorem{theorem}{Theorem} %[section]
\newtheorem{claim}[theorem]{Claim}
\newtheorem{proposition}[theorem]{Proposition}
\newtheorem{lemma}[theorem]{Lemma}
\newtheorem*{theorem*}{Theorem}
\newtheorem*{claim*}{Claim}
\newtheorem*{proposition*}{Proposition}
\newtheorem*{lemma*}{Lemma}
\newtheorem*{corollary*}{Corollary}
\theoremstyle{definition}
\newtheorem{observation}[theorem]{Observation}
\newtheorem*{definition*}{Definition}
\newtheorem*{observation*}{Observation}
\newtheorem*{remark*}{Remark}
\newtheorem*{example*}{Example}
\newtheorem*{question*}{Question}
\newtheorem*{exercise*}{Exercise}
\newtheorem*{fact*}{Fact}
\newtheorem*{notation*}{Notation}
\newcommand{\bbR}{\mathbb{R}}
\newcommand{\calA}{\mathcal{A}}
\newcommand{\calB}{\mathcal{B}}
\newcommand{\calH}{\mathcal{H}}
\newcommand{\calK}{\mathcal{K}}
\newcommand{\actson}{\curvearrowright}
\newcommand{\HS}{\calH}
\newcommand{\Hyp}{\hat{\calH}}
\newcommand{\PHyp}{\breve{\calH}}
\newcommand{\dmax}{d_{\max}}
\DeclareMathOperator{\Stab}{Stab}
\DeclareMathOperator{\Aut}{Aut}
\DeclareMathOperator{\diam}{diam}
\title{Globally stable cylinders for hyperbolic CAT(0) cube complexes}
\author{Nir Lazarovich\thanks{Supported by the Israel Science Foundation (grant no. 1562/19), and by the German-Israeli Foundation for Scientific Research and Development.} ~and Michah Sageev\thanks{Supported by the Israel Science Foundation (grant no. 660/20)}}
\begin{document}

\maketitle
% \section{Introduction}
\begin{abstract}
    Rips and Sela \cite{rips1995canonical} introduced the notion of globally stable cylinders and asked if all Gromov hyperbolic groups admit such. We prove that hyperbolic cubulated groups admit globally stable cylinders. 
\end{abstract}

\paragraph{Globally stable cylinders.}

Rips and Sela \cite{rips1995canonical} introduced the notion of globally stable cylinders in their work on solutions of equations over groups. In the context of a $\delta$-hyperbolic group, the idea is as follows. Given two points  $x$ an $y$ in a $\delta$-hyperbolic space $X$, one can choose a geodesic $[x,y]$ joining them. Then given three vertices $x,y$ and $z$, one has that the two geodesics $[x,y]$ and $[x,z]$ ``fellow travel'' (i.e. are within $\delta$ of each other) up to some median point from which they diverge. The idea of stable cylinders is to thicken the geodesics into ``cylinders'' which are not just close to one another, but actually agree for most of the time they fellow travel. 

More precisely,  let $\theta\ge 0$. A $\theta$-cylinder $C(x,y)$ of $x,y\in X$ is a subset that satisfies $[x,y]\subseteq C(x,y) \subseteq N_\theta ([x,y])$ for every geodesic $[x,y]$ connecting $x,y$.

A choice of $\theta$-cylinders $C:X\times X\to 2^X$ for every $x,y\in X$ is called globally stable if there exist $k,R\ge 0$ such that:
\begin{enumerate}
    \item \emph{inversion invariance:} $C(x,y)=C(y,x)$ for all $x,y\in X$, and
    \item \emph{$(k,R)$-stability:} for all $x,y,z\in X$ there exists $k$ $R-$balls $B_1,\ldots,B_k$ in $X$ such that 
    \begin{equation}\label{eq: stability}\tag{$\ast$}
        (C(x,y)\cap B(x,\rho)) - 
    \bigcup_{i=1}^k B_i =( C(x,z)\cap B(x,\rho) )- \bigcup_{i=1}^k B_i
    \end{equation} 
    where $\rho=(y.z)_x= \frac{1}{2}(d(x,y)+d(x,z) - d(y,z))$ is the Gromov product (see Figure \ref{fig:stable_rep}).
\end{enumerate}
\begin{figure}
    \centering
    \includegraphics[scale=0.6]{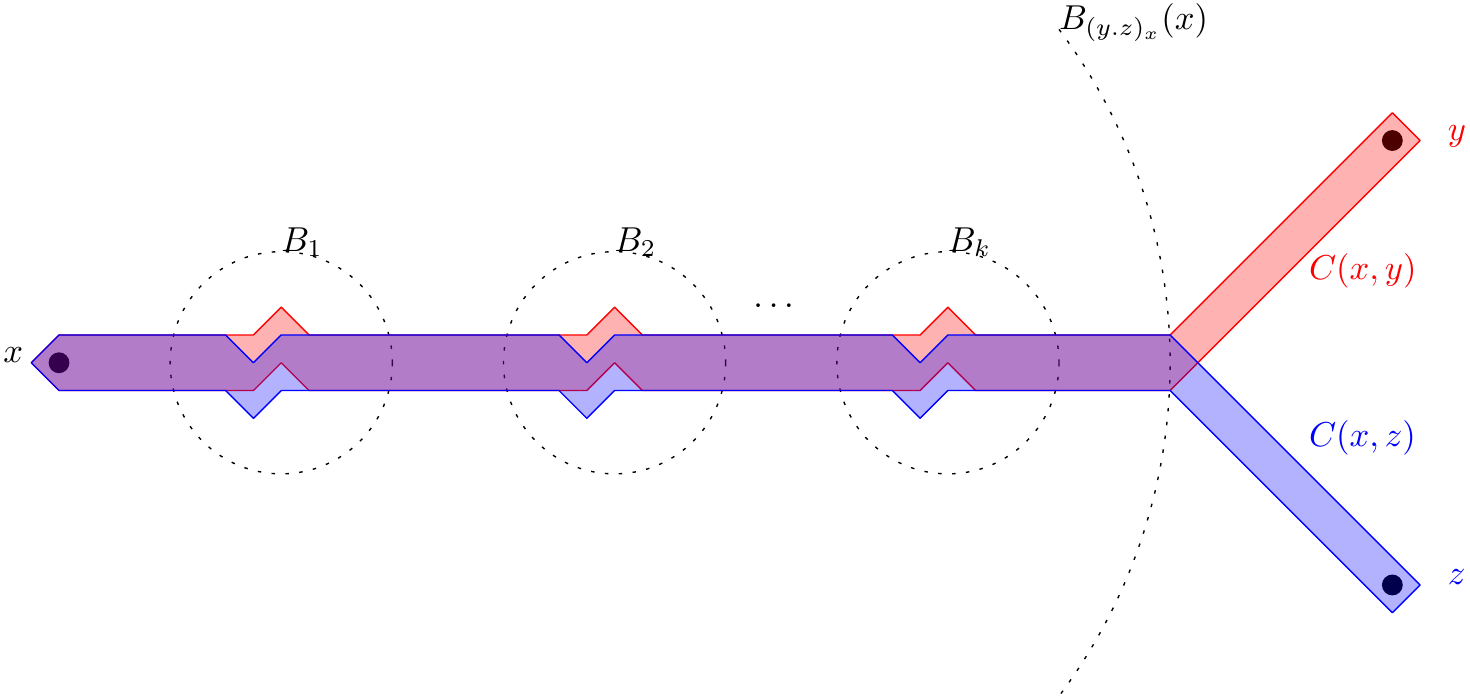}
    \caption{$(k,R)$-uniformly stable cylinders}
    \label{fig:stable_rep}
\end{figure}

Let $G$ be a hyperbolic group. The group $G$  \emph{admits globally stable cylinders} if some geodesic hyperbolic space $X$ on which $G$ acts properly cocompactly admits globally stable cylinders which are \emph{$G$-invariant}, i.e, $C(gx,gy) = gC(x,y)$ for all $x,y\in X$ and $g\in G$. For completeness we show in Proposition \ref{prop: stable cylinders under qi} that this is a group property, and does not depend on the space $X$. 

Rips and Sela \cite{rips1995canonical} showed that hyperbolic $C'(1/8)$-small cancellation groups have globally stable cylinders. In fact, they show that such groups are $(1,R)-$stable.
They also asked if all hyperbolic groups admit globally stable cylinders. 
Recently, Kharlampovich-Sklinos \cite{kharlampovich2021first} used the globally stable cylinders of $C'(1/8)$-small cancellation groups to study the first-order theory of random groups.
Globally stable cylinders are also used by the first author \cite{lazarovich2021complexity} to study the connection between complexity and volume of hyperbolic groups.

In this paper, we prove the existence of globally stable cylinders for hyperbolic cubulated groups.

\begin{theorem*}\label{thm: main thm}
Let $X$ be a hyperbolic $d$-dimensional CAT(0) cube complex. Then, $X$ admits globally stable cylinders which are $\Aut(X)$-invariant.

In particular, every hyperbolic cubulated group admits globally stable cylinders.
\end{theorem*}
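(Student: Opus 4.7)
My plan is to define cylinders using the combinatorial geodesic interval and deduce stability from $\delta$-hyperbolicity together with the median-algebra identities for hyperplanes. For vertices $x,y\in X^{(0)}$, let $\calW(x,y)$ denote the set of hyperplanes separating $x$ from $y$, and let
\[ I(x,y)=\{v\in X^{(0)}:d_1(x,v)+d_1(v,y)=d_1(x,y)\}\]
be the combinatorial interval in the $\ell_1$ metric $d_1$. I set $C(x,y)$ to be the subcomplex of $X$ spanned by $I(x,y)$, which is manifestly inversion- and $\Aut(X)$-invariant. The interval-subcomplex is convex (both medianly and in CAT(0)), so any geodesic from $x$ to $y$ lies inside it, giving $[x,y]\subseteq C(x,y)$. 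Conversely, in a $\delta$-hyperbolic graph any two combinatorial geodesics with common endpoints are Hausdorff-close, so $I(x,y)\subseteq N_{\theta_0}([x,y])$ for some $\theta_0=\theta_0(\delta)$; absorbing the cube-diameter correction $\sqrt d$ yields $C(x,y)\subseteq N_\theta([x,y])$ with $\theta=\theta(\delta,d)$.

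For stability, take $x,y,z\in X^{(0)}$ and let $m$ be the median of $\{x,y,z\}$, so that $d_1(x,m)=\rho$. I use the standard identities $\calW(x,y)\cap\calW(x,z)=\calW(x,m)$ and $\calW(x,y)=\calW(x,m)\sqcup\calW(m,y)$, together with the characterization $v\in I(x,y)\iff \calW(x,v)\subseteq\calW(x,y)$. Suppose $v\in(I(x,y)\setminus I(x,z))\cap B(x,\rho)$ and pick a witness hyperplane $h\in\calW(x,v)\cap\calW(m,y)$. From $v\in I(x,y)$ one computes $(v,y)_x=d_1(x,v)$, so the geodesics $[x,v]$ and $[x,y]$ are $\delta$-fellow-travelers along their entire common length $d_1(x,v)$.

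Let $p=[x,v]\cap h$ and $q=[x,y]\cap h$, choosing $[x,y]$ to pass through $m$. Both crossings lie in the fellow-traveling region, so $p$ is $\delta$-close to a point of $[x,y]$ which in turn is $O(\delta)$-close to $q$ (using that a geodesic meets a convex subcomplex, such as the carrier of $h$, in a piece of length controlled by $\delta$). This gives $d_1(p,q)=O(\delta)$. Since $h\notin\calW(x,m)$, the crossing $q$ lies past $m$ along $[x,y]$, whence $d_1(x,q)\ge\rho$; combined with $d_1(x,p)\le\rho$ this forces $d_1(m,q)=O(\delta)$, and therefore $d_1(m,p),d_1(m,v)=O(\delta)$. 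The case $v\in I(x,z)\setminus I(x,y)$ is identical by symmetry, so the full symmetric difference $(C(x,y)\triangle C(x,z))\cap B(x,\rho)$ lies inside a single ball $B(m,R)$ with $R=R(\delta,d)$. This establishes the strongest form of stability, namely $(1,R)$-stability, matching the Rips--Sela small-cancellation case.

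The main obstacle I foresee is the localization estimate $d_1(p,q)=O(\delta)$: this requires cleanly passing between the combinatorial and CAT(0) viewpoints and exploiting quasi-convexity of hyperplanes in a hyperbolic cube complex, where the dimension bound $d$ prevents pathological crossing patterns. Once this is in hand, the remainder of the proof is median-algebra bookkeeping plus a routine thickening to cover non-vertex points (replacing $R$ by $R+\sqrt d$), and the construction is automatically $\Aut(X)$-invariant by virtue of being defined purely from the cube-complex structure.
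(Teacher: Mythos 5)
Your construction $C(x,y)=I(x,y)$ does not work, and the gap is exactly at the ``localization estimate'' you flagged as the main obstacle: it is false that $d_1(p,q)=O(\delta)$, and consequently false that $v$ is $O(\delta)$-close to $m$. The underlying error is the claim that a geodesic meets the carrier of a hyperplane in a piece of length controlled by $\delta$: in a hyperbolic cube complex a hyperplane can run parallel to a geodesic for an arbitrarily long time (it is the \emph{width} of the carrier that is $1$, not the length of a geodesic's intersection with it), so the two crossing points of $\hat h$ with the fellow-travelling geodesics $[x,v]$ and $[x,y]$ may be far apart even though the geodesics stay $\delta$-close.

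Here is an explicit counterexample to the conclusion you derive. Let $X$ be the $2$-dimensional CAT(0) cube complex consisting of a bi-infinite unit-width strip $S=\bbR\times[0,1]$ with a ray $R_k^j$ attached at each vertex $(k,j)$, $k\in\bbZ$, $j\in\{0,1\}$. This $X$ is quasi-isometric to a tree and is $\delta$-hyperbolic for a $\delta$ independent of any of the choices below. Take $x=(0,0)$, let $y$ be a point far out on $R_n^1$ and $z$ a point far out on $R_n^0$. Then $m=m(x,y,z)=(n,0)$ and $\rho=d(x,m)=n$. The hyperplane $\hat h=\{b=\tfrac12\}$ lies in $\calW(m,y)$ but not in $\calW(x,z)$, and one checks directly that the entire segment $[0,n-1]\times\{1\}$ lies in $\bigl(I(x,y)\setminus I(x,z)\bigr)\cap B(x,\rho)$. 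This set has diameter $\approx n$, so it cannot be covered by any fixed number $k$ of $R$-balls as $n\to\infty$: $(k,R)$-stability fails for every $k$ and $R$ with the cylinders $I(x,y)$. In particular, $(1,R)$-stability fails, so the analogy with Rips--Sela's small-cancellation result does not carry over by this route.

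This is precisely the obstruction the paper is engineered to overcome. The paper does \emph{not} take $C(x,y)=I(x,y)$; it takes $C(x,y)=I_D(x,y)$, a slightly \emph{fatter} set that deliberately includes vertices such as $(a,1)$ above that sit behind a peripheral hyperplane running transverse to a long pencil of the interval's hyperplanes. (In the example, $\hat h$ has intersection number $n>D$ with $\vec\HS(x,z)$, so $h$ is discarded from $\PHyp_D(x,z)$ and the offending segment is absorbed into $I_D(x,z)$ as well as into $I_D(x,y)$.) With that modification, the symmetric difference is no longer controlled by a single ball around the median; the core technical work (Observation~\ref{obs: bounded projection}, Claims~\ref{claim: distinct to disjoint subcomplexes} and~\ref{claim: pigeonhole for posets}, and Proposition~\ref{prop: bounded hyperplane difference}) is a grid/pigeonhole argument bounding the \emph{number} of distinct projections of the offending halfspaces by a constant $M(\delta,d)$, yielding $(k,R)$-stability for a $k$ that is in general larger than $1$. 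So the missing idea in your proposal is exactly the passage from $\PHyp$ to $\PHyp_D$ (equivalently from $I$ to $I_D$) and the resulting need to argue with finitely many balls rather than one.
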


\begin{remark*}
Note that in the Theorem, $X$ is not required to be locally finite. Moreover the parameters $k$ and $R$ in \eqref{eq: stability} depend only on the hyperbolicity constant $\delta$ and the dimension $d$ of $X$.
\end{remark*}

% \section{Proof of Theorem \ref{thm: main thm}}

\paragraph{Preliminaries on CAT(0) cube complexes.}
We briefly recall standard material in CAT(0) cube complexes, as well as introducing some new definition which we will employ. For a more extensive discussion on CAT(0) cube complexes see e.g \cite{bestvina2014geometric}. 
Let $X$ be a hyperbolic $d$-dimensional CAT(0) cube complex. 
Let $\Hyp(X)$ be its set of hyperplanes.
Every hyperplane $\hat{h}$ separates $X$ into two components. In each component there is a unique maximal subcomplex $h$ of $X$ which we call a halfspace bounded by $\hat{h}$.
The halfspaces of $X$ are convex and the collection of all of them is denoted by $\HS(X)$.
The set $\HS(X)$ has a natural involution $h \mapsto h^*$ mapping a halfspace to its complementary halfspace. Moreover, we denote by $\hat{\;}:\HS(X)\to \Hyp(X)$ the map $h\mapsto \hat{h}$ that assigns to a halfspace its bounding hyperplane.
For $x,y\in X^{(0)}$, define the set of halfspaces separating $x$ from $y$ as
$$ \vec\HS(x,y) = \{ k\in \HS(X) | x\notin k \ni y\}, $$
and its corresponding set of hyperplanes by
$$ \Hyp(x,y) = \{ \hat{k}| k\in \HS(x,y)\}.$$
All other hyperplanes are called \emph{peripheral} to $x,y$, and the set of \emph{peripheral halfspaces} is defined as
$$\PHyp(x,y) = \{ h\in \calH(X) | x,y\in h^*\}.$$ 
The interval between $x$ and $y$ is the union of all $\ell_1$ geodesics between $x$ and $y$. It turns out that the interval is also the intersection of all the halfspaces that contain both $x$ and $y$. That is, it is the set $$ I(x,y) = \{z\in X | d(x,y) = d(x,z)+d(z,y)\} = \bigcap_{h\in\PHyp(x,y)} h^*$$ where $d$ denotes the $\ell_1$-metric on $X$.
The interval $I(x,y)$ is a convex subcomplex of $X$, and is isomorphic to the dual cube complex to $\HS(I(x,y))$, the pocset of halfspaces  associated to the set of hyperplanes in $\vec\HS(x,y)$. 
The nearest-point projection $\pi_{I(x,y)}:X\to I(x,y)$ can be expressed using the language of ultrafilters as $\pi_{I(x,y)}(v) = v\cap \HS(I(x,y))$.

\begin{observation} 
\label{obs: geodesic neighborhood}
Note that in the case that $X$ is $\delta$-hyperbolic, any two $\ell_1$-geodesics between $x$ and $y$ are contained in  $\delta$-neighborhoods of one another. Thus the interval $I(x,y)$ is contained in the $\delta$-neighborhood of any $\ell_1$-geodesic joining $x$ and $y$. 
\end{observation}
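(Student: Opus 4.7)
The plan is to reduce this observation to two standard ingredients: the $\delta$-thin-bigon property of $\delta$-hyperbolic geodesic spaces, and the characterization of $I(x,y)$ as the union of all $\ell_1$-geodesics from $x$ to $y$.

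For the first (bigon) claim, fix two $\ell_1$-geodesics $\gamma_1,\gamma_2$ from $x$ to $y$ and a point $p\in\gamma_1$. Pick any auxiliary point $m\in\gamma_2$ and form the (possibly degenerate) $\ell_1$-geodesic triangle with vertices $x,y,m$ whose three sides are $\gamma_1$, the initial subarc of $\gamma_2$ from $x$ to $m$, and the terminal subarc of $\gamma_2$ from $m$ to $y$. The $\delta$-slim-triangles hypothesis forces $p$, viewed as a point on the side $\gamma_1$, to lie within distance $\delta$ of the union of the other two sides, which is exactly $\gamma_2$. Swapping the roles of $\gamma_1$ and $\gamma_2$ yields the reverse inclusion, so $\gamma_1$ and $\gamma_2$ are contained in $\delta$-neighborhoods of each other.

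For the second claim, the defining identity $d(x,y)=d(x,z)+d(z,y)$ characterizes $z\in I(x,y)$ as a point lying on some $\ell_1$-geodesic from $x$ to $y$: concatenate any $\ell_1$-geodesic from $x$ to $z$ with any $\ell_1$-geodesic from $z$ to $y$. Hence $I(x,y)$ equals the union of all $\ell_1$-geodesics between $x$ and $y$, and the bigon claim applied to each such geodesic gives $I(x,y)\subseteq N_\delta(\gamma)$ for any fixed $\ell_1$-geodesic $\gamma$.

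The only delicate point is matching constants. Different conventions of $\delta$-hyperbolicity (slim triangles, thin triangles, the Gromov four-point condition) give bigon bounds that differ by bounded multiplicative factors; with the slim-triangles convention the bound is exactly $\delta$ as stated, and under any other convention the linear correction can be absorbed into $\delta$ from the outset. Since the observation is used only qualitatively in the paper, this constant-matching issue is the sole obstacle, and it is not a genuine one.
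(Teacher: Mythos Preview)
Your argument is correct and supplies exactly the standard details the paper leaves implicit: the observation is stated in the paper without a formal proof, relying on the well-known bigon/slim-triangle fact and the description of $I(x,y)$ as the union of all $\ell_1$-geodesics, which is precisely what you spell out. There is nothing to add.
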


The median $m(x,y,z)$ of $x,y,z$ in $X$ is the unique point in $I(x,y)\cap I(y,z) \cap I(x,z)$. Alternatively, $m(x,y,z)$ is the vertex $\pi_{I(x,y)}(z)$.

We denote $\hat{h}\pitchfork \hat{k}$ if the hyperplanes $\hat{h},\hat{k}\in\Hyp(X)$ intersect. 
A nested sequence of halfspaces $h_1<\ldots<h_n$ is called a \emph{pencil of halfspaces} and the corresponding sequence of bounding hyperplanes will be called a \emph{pencil of hyperplanes}. 

We now want to refine the notion of peripheral hyperplanes and halfspaces to those that do not run too long parallel to a given interval. For a subset $\calK \subset \HS$ and a halfspace $h\in \HS$ define their intersection number as the maximal size of a pencil of hyperplanes in $\calK$ that intersect $\hat{h}$, i.e, $$i(h,\calK)=\max\{n | \exists k_1 < \ldots < k_n \in \calK :  \hat{h}\pitchfork\hat{k_i} \forall 1\le i\le n\}.$$
Note that $i(h,\calK)$ may be infinite.
We then define the following subset of $\PHyp(x,y)$,
$$\PHyp_D(x,y) = \{ h\in \PHyp(x,y) | i(h,\vec\HS(x,y))\le D\},$$
We call these the \emph{$D$-peripheral halfspaces} of $[x,y]$
and correspondingly 
$$I_D(x,y) = \bigcap_{h\in \PHyp_D(x,y)} h^*.$$

\begin{observation}\label{obs: bounded projection}
If $h\in \PHyp_D(x,y)$ then $\diam(\pi_{I(x,y)} h)\le Dd$. Otherwise, there are points $u,v\in \pi_{I(x,y)}(h)$ of distance $>Dd$. By the pigeonhole principle out of the $Dd+1$ hyperplanes separating $u,v$ there are $D+1$ hyperplanes satisfying $k_1<\ldots<k_{D+1}\in \vec \HS(x,y)$. But, by the properties of the nearest point projection, $\hat k_1,\ldots,\hat k_{D+1}$ are transverse to $\hat h$, contradicting the assumption that $h\in \PHyp_D(x,y)$. 
\end{observation}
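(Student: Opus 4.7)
The plan is to argue by contradiction. If $\diam(\pi_{I(x,y)}(h))>Dd$, I will produce a pencil $k_1<\ldots<k_{D+1}$ in $\vec\HS(x,y)$ whose bounding hyperplanes are all transverse to $\hat h$, contradicting $i(h,\vec\HS(x,y))\le D$. Concretely, pick $a,b\in h$ whose projections $u=\pi_{I(x,y)}(a)$ and $v=\pi_{I(x,y)}(b)$ satisfy $d(u,v)\ge Dd+1$. This yields at least $Dd+1$ hyperplanes separating $u$ from $v$, and since $u,v\in I(x,y)$ and intervals are convex, each of them already lies in $\vec\HS(x,y)$.

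The first substantive step is to verify that every such separating hyperplane $\hat k$ is transverse to $\hat h$. Here I would use the ultrafilter description of the nearest-point projection recalled in the preliminaries: $a$ and $u$ agree on every halfspace in $\HS(I(x,y))$, and similarly for $b$ and $v$. Hence $a$ and $b$ lie on opposite sides of $\hat k$, and since $h$ is convex and contains both, $\hat k$ must cross $\hat h$.

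The second step promotes these $Dd+1$ halfspaces to a nested chain of length $D+1$. After orienting each to contain $v$, any two of them meet at $v$, so they are either nested or transverse. Since $X$ is $d$-dimensional, no family of pairwise transverse hyperplanes has size more than $d$, so every antichain in the containment order has size at most $d$. By Mirsky's theorem, a finite poset of size $Dd+1$ with antichain width at most $d$ contains a chain of length at least $D+1$, and this is the desired pencil.

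The only conceptually nontrivial input is the transversality step, which leans on the ultrafilter picture of $\pi_{I(x,y)}$; the rest is a short Dilworth/Mirsky pigeonhole using the dimension bound on pairwise crossings, and I do not expect any real obstacle there.
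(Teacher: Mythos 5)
Your proposal is correct and follows the paper's argument in substance: you extract the $Dd{+}1$ separating hyperplanes, show transversality via the ultrafilter description of the gate, and then extract a chain of length $D{+}1$ using Dilworth/Mirsky where the paper simply says ``pigeonhole.'' One small imprecision worth tightening: to conclude $\hat k\pitchfork\hat h$ you need $\hat k$ to cross both $h$ \emph{and} $h^*$; crossing $h$ follows from separating $a,b\in h$, while crossing $h^*$ follows from the fact (which you did establish) that $\hat k\in\Hyp(x,y)$ and $x,y\in h^*$, so both halves are in your argument but the second is not explicitly invoked at that point.
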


Henceforth, we will assume that $X$ is a $\delta$-hyperbolic CAT(0) cube complex of dimension $d$.
A \emph{grid of size $m$} is a pair of pencils of hyperplanes $\hat{h}_1,\ldots, \hat{h}_m$ and $\hat{k}_1,\ldots,\hat{k}_m$ such that $\hat h_i\pitchfork \hat k_j$ for all $1\le i,j \le m$. 

\begin{lemma}\label{lem: maximal grid number}
There exists a maximal number $D=D(\delta)$ such that $X$ contains a grid of size $D$. 
\end{lemma}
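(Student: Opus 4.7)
The plan is to extract from a grid of size $m$ four ``corner'' vertices whose pairwise distances contradict the Gromov $4$-point hyperbolicity condition once $m$ exceeds a constant depending only on $\delta$. Given pencils $\hat h_1<\cdots<\hat h_m$ and $\hat k_1<\cdots<\hat k_m$ with $\hat h_i\pitchfork \hat k_j$ for all $i,j$, I would pick, for each $(\epsilon_1,\epsilon_2)\in\{0,m\}^2$, a vertex $v_{\epsilon_1,\epsilon_2}$ in the intersection $h_1\cap k_1$, $h_m^*\cap k_1$, $h_1\cap k_m^*$, or $h_m^*\cap k_m^*$ respectively. Each such intersection is non-empty because its two defining halfspaces come from different pencils and meet by the grid crossing condition. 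By nesting within each pencil, $v_{00}$ lies in every $h_l$ and every $k_l$, $v_{m0}$ lies in every $h_l^*$ and every $k_l$, and so on.

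Counting grid hyperplanes then yields the lower bounds $d(v_{00},v_{mm}),\;d(v_{m0},v_{0m})\ge 2m$ on the two ``diagonals'' (each crosses all $2m$ grid hyperplanes) and $d(v_\epsilon,v_{\epsilon'})\ge m$ on the four ``sides.''

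The crucial step is obtaining upper bounds on the side distances; without them the 4-point condition gives no contradiction. My plan is to refine the choice of corners by taking them to be cubical nearest-point projections of a fixed basepoint (say $v_{00}$) onto the other three convex subcomplexes, and then to argue that any hyperplane separating $v_{00}$ from $v_{m0}$ other than the $m$ grid hyperplanes $\hat h_l$ must itself be nested with $\hat h_m$ in such a way that it would effectively extend the $h$-pencil. The number of such ``extending'' hyperplanes should in turn be bounded by a $\delta$-only constant via a triangle-slimness argument on the carriers of the grid hyperplanes, yielding $d(v_\epsilon,v_{\epsilon'})\le m+O(\delta)$ on the sides.

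Finally, I would apply the Gromov $4$-point condition: among the three sums of opposite-pair distances, the two largest differ by at most $2\delta$. Since the diagonal sum is $\ge 4m$ while each of the other two is $\le 2m+O(\delta)$, this forces $2m-O(\delta)\le 2\delta$, hence $m\le D(\delta)$. I expect the main obstacle to be the third step above: controlling the side distances, which is where the hyperbolicity of $X$ must be exploited most directly; the other steps are essentially bookkeeping with halfspaces and the standard $4$-point condition.
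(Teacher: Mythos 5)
Your setup is the same as the paper's: take a grid of size $m$, pick the four corner vertices $v_{00}\in h_1\cap k_1$, $v_{m0}\in h_m^*\cap k_1$, $v_{mm}\in h_m^*\cap k_m^*$, $v_{0m}\in h_1\cap k_m^*$, and form a geodesic quadrilateral. However, the tool you reach for afterward — the Gromov four-point inequality — is the wrong one, and it is what forces you into the step you flag as the ``main obstacle.'' The four-point condition only produces a contradiction if you also have \emph{upper} bounds on the side lengths (something like $d(v_{\epsilon},v_{\epsilon'})\le m+O(\delta)$), and you give no proof of this. Your sketch (``any hyperplane separating $v_{00}$ from $v_{m0}$ other than the $\hat h_l$ would effectively extend the $h$-pencil'') is not an argument: a hyperplane separating $v_{00}$ from $h_m^*\cap k_1$ need not be nested with the $\hat h_l$, and even projecting $v_{00}$ to the other corner regions does not obviously prevent the projection distance from being much larger than $m$. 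So as written the proposal has a genuine gap exactly where you suspected.

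The paper avoids this issue entirely by invoking the $2\delta$-slimness of geodesic \emph{quadrilaterals} rather than the four-point condition. Slimness says each side of the quadrilateral lies in the $2\delta$-neighborhood of the union of the other three sides; crucially, this needs only \emph{lower} bounds on how far apart opposite sides are. With the corners chosen as above, every point of $[v_{00},v_{m0}]$ lies in $k_1$ while every point of $[v_{0m},v_{mm}]$ lies in $k_m^*$, so the two are separated by all $m$ hyperplanes $\hat k_1,\ldots,\hat k_m$; likewise for the other pair of opposite sides via the $\hat h_i$. Picking a ``middle'' point of one side (one that is separated from each endpoint by roughly $m/2$ of the $\hat h_i$) and applying slimness immediately forces $m=O(\delta)$. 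No side-length upper bound is ever needed. You should replace your step involving the four-point inequality and nearest-point projections with this slimness argument; the corner construction and the lower bounds in your proposal can then be kept essentially verbatim.
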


\begin{proof}
Assume $h_1 <\ldots< h_D\in \HS$ and $k_1<\ldots<k_D\in \HS$ satisfy $\hat h_i\pitchfork \hat k_j$ for all $1\le i,j \le D$.
Let $v_1,v_2,v_3,v_4$ be vertices of $X$ in $h_D \cap k_D, h_1^* \cap k_D, h_1^* \cap k_1^*, h_D \cap k_1^*$.
Consider a geodesic quadrilateral $v_1v_2v_3v_4$. Both pairs of opposite sides are separated by at least $D$ hyperplanes. Since $X$ is $\delta$-hyperbolic, every quadrilateral has to be $2\delta$-slim. Thus, $D\le 2\delta$. 
\end{proof}

Henceforth, let $D$ be as in the Lemma \ref{lem: maximal grid number}. 
Our goal is to prove that the choice $C(x,y) = I_D(x,y)$ for all $x,y\in X$ is a $(k,R)$-uniformly stable choice of $\theta$-cylinders for some fixed $k,R,\theta$ that depend only on $X$ (in fact, only on $d$ and $D$).

We begin by showing that $I_D(x,y)$ are $\theta$-cylinders. First note, that by Observation \ref{obs: geodesic neighborhood}, $I(x,y)\subset N_\delta([x,y])$ for any geodesic $[x,y]$. Thus, to prove that $I_D(x,y)$ are $\theta$-cylinders, it suffices to prove the following lemma. 

\begin{lemma}\label{lem: cylinders}
Let $\theta = Dd$. Then $I(x,y) \subset I_D (x,y) \subseteq N_\theta(I(x,y))$.
\end{lemma}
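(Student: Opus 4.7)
The first inclusion is immediate: since $\PHyp_D(x,y)\subseteq \PHyp(x,y)$, the defining intersection for $I_D(x,y)$ is over a smaller index set, so $I(x,y)=\bigcap_{\PHyp(x,y)}h^*\subseteq \bigcap_{\PHyp_D(x,y)}h^*=I_D(x,y)$.

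For the second inclusion, my plan is to fix $z\in I_D(x,y)$, let $\pi:=\pi_{I(x,y)}$ be the nearest-point projection, and bound the number $d(z,\pi(z))$ of hyperplanes separating $z$ from $\pi(z)$. Because $\pi(z)$ agrees with $z$ on every hyperplane in $\vec\HS(x,y)$, the separating hyperplanes are all peripheral; let $\calF\subseteq \PHyp(x,y)$ be the set of halfspaces $h$ with $z\in h$ and $I(x,y)\subseteq h^*$, so that $d(z,\pi(z))=|\calF|$. The hypothesis $z\in I_D(x,y)$ rules out any $h\in \calF$ lying in $\PHyp_D(x,y)$; hence each such $h$ satisfies $i(h,\vec\HS(x,y))\ge D+1$.

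Next I would exploit the combinatorial structure of $\calF$. Any two $h,h'\in\calF$ share $z\in h\cap h'$ and have $x,y\in h^*\cap h'^*$, so the four-quadrant test rules out a facing pair; each pair is either nested or transverse. Since pairwise transverse halfspaces bound a cube and $\dim X=d$, any pairwise-transverse subfamily of $\calF$ has size at most $d$. Dilworth's theorem then decomposes $\calF$ into at most $d$ chains (pencils), reducing the task to bounding the length of each chain by $D$.

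The heart of the argument, and the main obstacle, is this chain-length bound. Given a pencil $h_1<\ldots<h_L$ in $\calF$, the property $i(h_1,\vec\HS(x,y))\ge D+1$ supplies a pencil $k_1<\ldots<k_{D+1}$ in $\vec\HS(x,y)$ with $\hat k_j\pitchfork \hat h_1$ for all $j$, and I would upgrade these crossings to $\hat k_j\pitchfork \hat h_l$ for every $l$. Indeed, $\hat k_j\pitchfork \hat h_1$ provides vertices in $k_j\cap h_1$ and $k_j^*\cap h_1$, and the inclusion $h_1\subseteq h_l$ places them in $k_j\cap h_l$ and $k_j^*\cap h_l$; while $x,y\in h_l^*$ lie on opposite sides of $\hat k_j\in\vec\HS(x,y)$, populating $k_j\cap h_l^*$ and $k_j^*\cap h_l^*$. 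All four quadrants being nonempty yields $\hat k_j\pitchfork \hat h_l$, so $(\hat h_l)_{l=1}^L$ and $(\hat k_j)_{j=1}^{D+1}$ form a grid of size $\min(L,D+1)$. Lemma~\ref{lem: maximal grid number} forces $L\le D$, and combining with the chain decomposition gives $d(z,I(x,y))=|\calF|\le Dd=\theta$, as required.
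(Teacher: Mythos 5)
Your proof is correct and takes essentially the same approach as the paper: both reduce to bounding a nested chain of separating peripheral halfspaces by $D$, by using the non-$D$-peripherality of the innermost halfspace to produce a pencil of $D+1$ hyperplanes in $\vec\HS(x,y)$ crossing it, then upgrading these crossings to a grid and contradicting Lemma~\ref{lem: maximal grid number}. The paper phrases it by contradiction and compresses the ``$\le d$ chains'' step into a single appeal to pigeonhole, whereas you spell out the Dilworth decomposition explicitly and give the four-quadrant verification in full, but the geometric content is identical.
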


\begin{proof}
By definition $I(x,y) \subset I_D(x,y)$.
To prove the second inclusion, assume for contradiction that there exists a vertex $w\in I_D(x,y)$ for which the distance $d(w,I(x,y))>\theta$. Since $I(x,y)$ is a convex subcomplex, this means that $w$ is separated by more than $\theta=Dd$ hyperplanes from $I(x,y)$. 
By the pigeonhole principle there are $D+1$ halfspaces $h_1<\ldots<h_{D+1}$ such that $I(x,y) \subseteq h_1$ and $w \notin h_{D+1}$. By definition of $I_D(x,y)$ the hyperplane $h_{D+1}$ is not in $\PHyp_{D}(x,y)$  and thus there exist $k_1<\ldots<k_{D+1}\in \vec \HS(x,y)$ such that $\hat h_{D+1} \pitchfork \hat k_j$ for all $1\le j\le D$. Since each $\hat k_i$ intersects $I(x,y)$, and $\hat h_i$ separates $I(x,y)$ from $\hat h_{D+1}$ it follows that $\hat h_i \pitchfork \hat k_j$ for all $1\le i,j \le D+1$. A contradiction to the choice of $D$.
\end{proof}

Let 
% $$ \HS_\rho(x) = \{ h\in \PHyp_D(x,y) | \dmax (x,\pi_{I(x,y)}(h)) < \rho\} $$
$$\PHyp_{D,\rho}(x,y) = \{ h\in \PHyp_D(x,y) | \dmax (x,\pi_{I(x,y)}(h)) < \rho\}$$
where $\dmax(x,A) = \max\{d(x,a)|a\in A\}$. Roughly speaking, this is the collection of $D$-peripheral halfspaces whose projection to $[x,y]$ is $\rho$-close to $x$. 

The key step in proving the existence of globally stable cylinders is to bound the number of projections of $D$-peripheral halfspaces for $[x,y]$ which are not $D$ peripheral for $[x,z]$. More precisely, we have the following.

\begin{proposition}\label{prop: bounded hyperplane difference}
There exists  $M = M(\delta,d)\ge 0$ such that for all $x,y,z\in X$ 
$$|\{ \pi_{I(x,y)} (h) \mid h\in \PHyp_{D,\rho-Dd}(x,y) - \PHyp_{D}(x,z)\}|\le M$$  
where $\rho = (y.z)_x$.
\end{proposition}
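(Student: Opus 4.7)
The plan is to split the bad set $\PHyp_{D,\rho-Dd}(x,y)\setminus \PHyp_D(x,z)$ according to how $h$ fails $\PHyp_D(x,z)$: case (A), where $h\in\vec\HS(x,z)$ so that $\hat h$ separates $x$ from $z$; and case (B), where $h\in\PHyp(x,z)$ but $i(h,\vec\HS(x,z))>D$. Writing $P=\pi_{I(x,y)}(h)$, $m=m(x,y,z)$, and $P_{\min}$ for the unique $x$-closest vertex of $P$, the strategy is to rule out case (A) entirely by contradicting the hypothesis $\dmax(x,P)<\rho-Dd$, and then to bound the number of distinct projections from case (B) by a constant $M=M(\delta,d)$.

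For case (A), since $m\in I(x,y)\subseteq h^*$ and $z\in h$, the hyperplane $\hat h$ lies in $\vec\HS(m,z)$. For any $\hat k\in\vec\HS(x,m)$ not crossing $\hat h$, the three non-empty quadrants $x\in k^*\cap h^*$, $m\in k\cap h^*$, and $z\in k\cap h$ (using $\vec\HS(x,m)\subseteq\vec\HS(x,z)$ so $z\in k$) force $k^*\cap h=\emptyset$, i.e.\ $h\subsetneq k$. Consequently $\hat h\subseteq k$ and $P\subseteq k$. By Observation~\ref{obs: bounded projection} at most $Dd$ hyperplanes of $\vec\HS(x,y)$ cross $\hat h$, so at least $|\vec\HS(x,m)|-Dd=\rho-Dd$ of the $\hat k\in\vec\HS(x,m)$ have $P\subseteq k$, yielding $d(x,P_{\min})\ge\rho-Dd$. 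Since $\dmax(x,P)\ge d(x,P_{\min})$, this contradicts $\dmax(x,P)<\rho-Dd$, so case (A) contributes nothing.

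For case (B), I choose a pencil $k_1<\dots<k_{D+1}$ in $\vec\HS(x,z)$ with $\hat k_i\pitchfork\hat h$. Since $h\in\PHyp_D(x,y)$, at most $D$ of the $k_i$'s lie in $\vec\HS(x,y)$, so some $\ell\in\vec\HS(m,z)$ in the pencil satisfies $\hat\ell\pitchfork\hat h$. Running the case (A) argument with $\hat\ell$ as a pivot, any $\hat k\in\vec\HS(x,m)$ crossing neither $\hat h$ nor $\hat\ell$ must satisfy $P\subseteq k$: the $\hat\ell$-nesting gives $\hat\ell\subseteq k$ as before, while the alternative $\hat h\subseteq k^*$ would place $\hat\ell$ and $\hat h$ in disjoint halfspaces of $\hat k$, contradicting $\hat\ell\pitchfork\hat h$. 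This yields $d(x,P_{\min})\ge\rho-Dd-s$ with $s=|\{\hat k\in\vec\HS(x,m):\hat k\pitchfork\hat\ell\}|$, pinning every bad case (B) projection to a narrow annulus of $I(x,y)$ near the median.

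The main obstacle is upgrading this pinning to a uniform count. My plan is to use Lemma~\ref{lem: maximal grid number} to argue that both $s$ and the pivot $\hat\ell$ (up to its projection $\pi_{I(x,y)}(\ell)$) are controlled by the grid bound $D=D(\delta)$ together with the dimension $d$: the bad projections should cluster near the projections of boundedly many admissible pivots, and a combinatorial count using hyperbolicity and the cubical structure of $I(x,y)$ should then deliver $M=M(\delta,d)$.
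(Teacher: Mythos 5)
Your case (A) analysis is essentially a (more roundabout) re-derivation of the paper's Claim~\ref{claim: peripheral containment}: the paper gets the same conclusion in one line by noting that if $h$ separates $x$ from $z$ then $m=\pi_{I(x,y)}(z)\in\pi_{I(x,y)}(h)$, whence $\dmax(x,\pi_{I(x,y)}(h))\ge\rho$, contradicting the $\rho-Dd$ hypothesis directly. That part of your argument is fine, if heavier than necessary.

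Case (B) is where the genuine gap lies, and you flag it yourself (``the main obstacle is upgrading this pinning to a uniform count\ldots my plan is\ldots the bad projections \emph{should} cluster''). Two things are missing and neither is a routine patch. First, you have not shown that $s=|\{\hat k\in\vec\HS(x,m):\hat k\pitchfork\hat\ell\}|$ is uniformly bounded: $\hat\ell$ is peripheral for $(x,m)$ but there is no reason it should be $D$-peripheral, so its projection to $I(x,m)$ can be long and $s$ can be large; Lemma~\ref{lem: maximal grid number} controls grids, not the number of hyperplanes crossing a single fixed hyperplane, so it does not directly bound $s$. Second, and more fundamentally, even if $s$ were bounded your conclusion only pins $d(x,P_{\min})$ to an interval of length $O(Dd+s)$ — i.e.\ it constrains the ``horizontal'' coordinate of $P$ along $[x,y]$. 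In a $d$-dimensional interval $I(x,y)$ (which can be isomorphic to a large region of $\bbR^d$), the level set $\{v\in I(x,y):d(x,v)=\text{const}\}$ can contain arbitrarily many disjoint subcomplexes of diameter $\le Dd$, so a bound on the horizontal coordinate does not give a bound on the number of distinct projections. You would need to pin $P$ in \emph{all} $d$ coordinates near $m$, which your argument does not do and which is precisely the hard part.

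The paper takes a different route that avoids this issue entirely: rather than trying to localize the projections near $m$, it assumes there are many distinct projections, uses Claim~\ref{claim: distinct to disjoint subcomplexes} (a pigeonhole in $\bbR^d$) to extract $T$ of them that are pairwise separated by a pencil $s^1<\dots<s^{T-1}$, then uses Claim~\ref{claim: pigeonhole for posets} (Dilworth plus pigeonhole) to make the witnessing pencils $k^i_1<\dots<k^i_{D+1}$ constant across $i$, and finally observes that the resulting constant pencil $k_1<\dots<k_{D+1}$ together with the separating pencil $s^1<\dots$ forms a grid of size $D+1$, contradicting Lemma~\ref{lem: maximal grid number}. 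That grid-building mechanism is the missing idea in your proposal.
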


\begin{proof}
First, we show that $\PHyp_{D,\rho-Dd}(x,y)$ is contained in the full peripheral set of hyperplanes for the pair $(x,z)$. 

\begin{claim}\label{claim: peripheral containment}
$\PHyp_{D,\rho-Dd}(x,y)\subset \PHyp(x,z)$.
\end{claim}

\begin{proof}
Note that $\rho = d(x,m)$ where $m=m(x,y,z)$ is the median of $x,y,z$ in $X$.
Assume for contradiction that $h\in  \PHyp_{D,\rho-Dd}(x,y) - \PHyp(x,z)$. 
It follows that $x\in h^*$ but the hyperplane $\hat{h}$ separates $x,z$, so $z\in h$.
Whence $$m= \pi_{I(x,y)}(z) \in \pi_{I(x,y)}(h).$$
But then $$\rho = d(x,m) <\dmax (x,\pi_{I(x,y)}(h))<\rho-Dd$$ a contradiction.
\end{proof}

To prove the proposition, we will assume that the number of projections $\pi_{I(x,y)} (h)$ of halfspaces in  $h\in \PHyp_{D,\rho-Dd}(x,y) - \PHyp_{D}(x,z)$ is large. Our strategy will be to construct a grid of hyperplanes of size greater than $D$ which contradicts the hyperbolicity of $X$. 
In carrying out this strategy, we will need to make use of the following two technical ``pigeonhole claims", which ensure that our objects of interest are disjoint. We abuse notation and let $\bbR^d$ denote the standard cubulation of $\bbR^d$. 

\begin{claim}\label{claim: distinct to disjoint subcomplexes}
For every $m,R,d$ there exists $N_1=N_1(m,R,d)$ such that for every $N_1$ distinct subcomplexes of $\bbR^d$ of diameter $\le R$  there is subcollection of $m$ subcomplexes  $A_1,\ldots,A_{m}$ and a pencil of $m-1$ halfspaces $s_1<\ldots< s_{m-1}$ such that $A_i \subset  s_{i-1}^* \cap s_i  $ for all $1\le i <m$.
\end{claim}

\begin{proof}
Let $K=K(d,R)$ be the number of subcomplexes in $[0,R]^d$.
Set $N_1 = m(R+1)^2dK$.
Let $\calA$ be a collection of $N_1$ distinct subcomplexes of $\bbR^d$ of diameter $\le R$. Projecting the subcomplexes to each of the standard axes , we get intervals of length at most $R$. Thus, at most $K$ subcomplexes in $\calA$ can have the same projections on all axes. By the Pigeonhole Principle, there exists a standard axis $\ell$ such that $\frac{N_1}{dK} = m(R+1)^2$ of them have distinct projections to $\ell$. Since each projection is an interval of length $\le R$, at least $\frac{m(R+1)^2}{(R+1)^2} = m$ of $\calA$ have disjoint projections to $\ell$. These projections are separated by $m-1$ midpoints of edges in $\ell$, and the subcomplexes are thus separated by the pencil of $m-1$ hyperplanes perpendicular to $\ell$ at those midpoints, as required. 
\end{proof}

\begin{claim}\label{claim: pigeonhole for posets}
Let $(P,\le)$ be a partially ordered set such that at most $d$ elements are pairwise incomparable. For all $m$ there exists $N_2=N_2(m,d)$ such that every sequence of length $N_2$ of elements in $P$ has a subsequence of length $m$ which is strictly monotonic or constant.
\end{claim}

\begin{proof}
Set $N_2 = dm^3$.
By Dilworth's Theorem \cite{dilworth2009decomposition}, $P$ can be partitioned to $d$ linearly ordered subsets. 
By the Pigeonhole Principle there is a subsequence of length $m^3$ whose elements are totally ordered. 
If at least $m$ of them are equal, we are done. Thus, we can pass to a further subsequence of length $m^2$ of distinct elements.
A standard application of the Pigeonhole Principle shows that one can now find a strictly monotonic subsequence.
\end{proof}

Returning to the proof of Proposition \ref{prop: bounded hyperplane difference}, recall that we will need to apply the pigeonhole claims to a collection of $M$ halfspaces to obtain a disjointness property for large enough $M$. Specifically, let $L=2D+2$, let 
$$T = N_2(N_2(\ldots(N_2(L,d)\ldots,d),d)$$  be the $D+1$ fold application of $N_2$, and finally let $M = N_1(T,Dd,d).$

Let $h^1,\ldots,h^M$ be halfspaces in $\PHyp_{D,\rho-Dd}(x,y) - \PHyp_{D}(x,z)$ with distinct projections $\pi_{I(x,y)}(h^i)$. 
By \cite{brodzki2009propertya}, the interval $I(x,y)$ is isomorphic to a subcomplex of $\bbR^d$. 
% In particular, there exists $N=N(d)$ such that at most $N$ subcomplexes of $I(x,y)$ of diameter $\le Dd$ \todo{N: Note that the diameter is $\le Dd$ and not $\le D$ as was previously written} intersect a given subcomplex of diameter $\le Dd$.
By Observation \ref{obs: bounded projection}, the projections $\pi_{I(x,y)}(h^i)$ to $I(x,y)$ have diameter $\le Dd$.  
Thus, by Claim \ref{claim: distinct to disjoint subcomplexes} we can find $T$ halfspaces among $h^1,\ldots,h^M$, without loss of generality $h^1,\ldots,h^{T}$, which have pairwise disjoint projections to $I(x,y)$ and which are separated by a pencil of hyperplanes. That is, there exist $s^1 < \ldots < s^{T-1}$  in $\vec\HS(x,y) \cap \vec\HS(x,z)$ such that $\pi_{I(x,y)}(h^i) \subset s^i \cap (s^{i-1})^*$. This is equivalent to  $(h^i)^* < s^i < h^{i+1}$ for all $1\le i < T$.

For each $1\le i \le T$, the halfspace $h^i$  is in $\PHyp(x,z)$ by Claim \ref{claim: peripheral containment}, but  not in $\PHyp_D(x,z)$ by assumption. Hence there exist $k^i_1<\ldots<k^i_{D+1}\in \vec\HS(x,z)$ such that $\hat h^i\pitchfork \hat k^i_j$ for all $1\le j \le D+1$. But since $h^i \in \PHyp_D(x,y)$, at least one of $k^i_1<\ldots<k^i_{D+1}$ is not in $\vec\HS(x,y)$. Namely $k^i_1 \in\HS(y,z)$ for all $1\le i\le T$.

\begin{figure}
    \centering
    \includegraphics[scale=0.7]{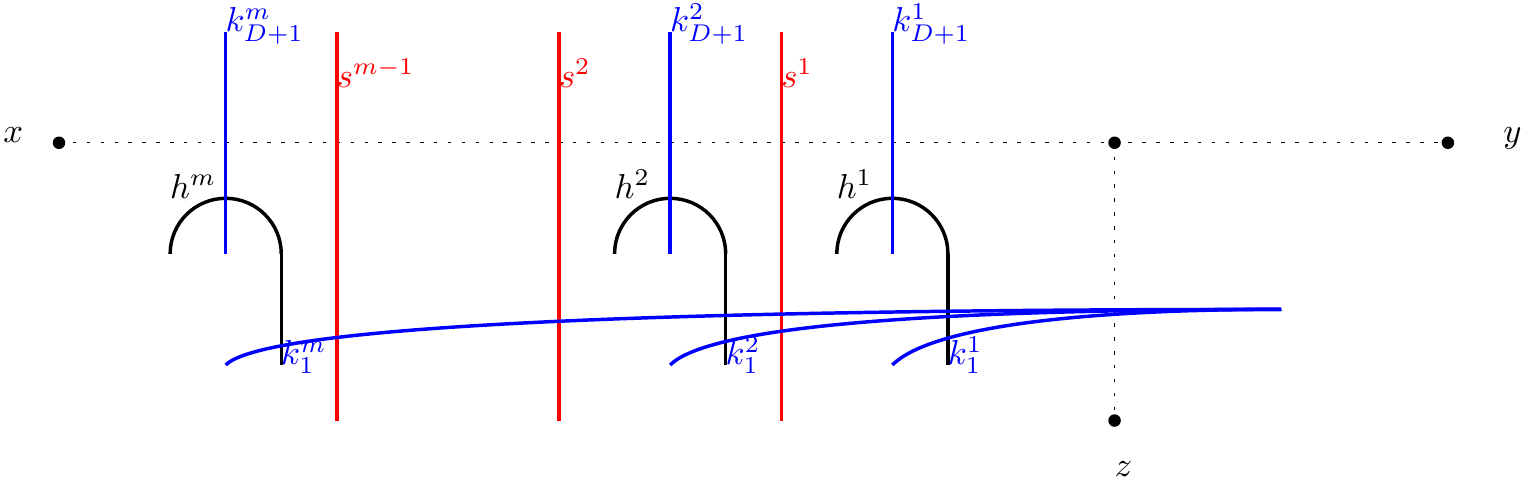}
    \caption{The hyperplane arrangement for the proof of Proposition \ref{prop: bounded hyperplane difference}.}
    \label{fig:my_label}
\end{figure}

By $D+1$ applications of Claim \ref{claim: pigeonhole for posets} we can pass to a subsequence of length $L$, which by abuse of notation we will simply denote $h^1,\ldots,h^L$, for which the sequences of halfspaces $k^i_j$ are constant or strictly monotonic in $i$. That is, for each $1\le j\le D+1$ either 
$$
k^1_j < \ldots < k^L_j, \quad k^1_j > \ldots > k^L_j, \quad \text{ or }  \quad k^1_j = \ldots = k^L_j.
$$

Let us show by induction on $j$ that (up to changing the halfspaces $k^i_j$ if necessary) we may assume that $k^1_j=\ldots=k^L_j$ for all $1\le j\le D+1$ in addition to the properties: $k^i_1 < \ldots < k^i_{D+1}$  and $\hat h^i \pitchfork \hat k^i_j$. 

For the base of the induction, let us show that $k^1_1=\ldots=k^L_1$. Assume for contradiction that $k^1_1,\ldots, k^L_1$ are distinct. Since each $k^i_1$ intersects $h^i$ on the one hand, and separates $y$ and $z$ on the other, it must intersect $s^{i'-1}$ for all $i'<i$. Since $L>2D+1$ this gives a grid of hyperplanes of size $D+1$ which contradicts the definition of $D$.
Thus, $k^1_1=\ldots=k^L_1$.

For the induction step, let $1\le j \le D$ and assume that $k^1_j=\ldots=k^L_j$. Let us prove that we may choose $k^i_{j+1}$ such that $k^1_{j+1}=\ldots=k^L_{j+1}$. Assume that $k^1_{j+1} < \ldots < k^L_{j+1}$. Then $k^{i}_{j}= k^1_j <k^1_{j+1} < k^{i}_{j+1}$. It follows that $$k^i_1<\ldots k^i_j < k^1_{j+1} < k^i_{j+2} <\ldots k^i_{D+1}$$ and $\hat{h}^i \pitchfork \hat{k}^1_{j+1} $ (since $k^i_{j} < k^1_{j+1} < k^{i}_{j+1}$ and both $\hat k^i_j\pitchfork \hat h^i \pitchfork \hat k^i_{j+1}$). This means that we can replace $k^{i}_{j+1}$ for all $1 \le  i \le L$ by $k^1_{j+1}$, and get $k^1_{j+1}=\ldots=k^L_{j+1}$ as desired. 
Similarly, if $k^1_{j+1} > \ldots > k^L_{j+1}$ we can replace $k^{i}_{j+1}$ for all $1 \le  i \le L$ by $k^L_{j+1}$ and get $k^1_{j+1}=\ldots=k^L_{j+1}$ as desired.

This shows that we may assume that $k^1_j = \ldots = k^L_j=:k_j$ for all $1\le j\le D+1$. However, since $\hat k_1,\ldots,\hat k_{D+1}$ intersect all the hyperplanes $\hat h^1,\ldots, \hat h^t$ they must intersect also the separating hyperplanes $\hat s^1,\ldots,\hat s^{t-1}$. As $L>D+2$ the sets $k_1<\ldots<k_{D+1}$ and $s^1<\ldots<s^{t-1}$ form a grid of size $D+1$ which contradicts the definition of $D$.
% \end{proof}
\end{proof}

% \begin{lemma}
% For every $M,N$ there exists $k,R$ such that if $Z$ is a CAT(0) cube complex, and $W\subseteq Z$ is a convex subcomplex such that there at most $M$ hyperplanes in $Z$ that do not intersect $W$, and each such hyperplane intersects at most $N$ nested hyperplanes that cross $W$ then there exist $M$ $R$-balls $B_1,\ldots,B_k$ in $Z$ such that $Z = W \cup B_1\cup\ldots\cup B_k$.
% \end{lemma}

% \begin{proof}
% Every point $y$ in $Z-W$ is separated from $W$ by one of the $M$ hyperplanes of $Z$ that do not intersect $W$. Let $\hat h$ be such a hyperplane, we may assume that $y$ is in the carrier of $\hat h$. Any nested collection of hyperplanes intersecting $\hat h$ must have size bounded by $N+M$, and so $\diam(\hat h) \le d\cdot(N+M)$.
% This shows that there are $M$ balls of radius $d\cdot(N+M)+1$ which cover $Z-W$.
% \end{proof}

\begin{proof}[Proof of Theorem]
Define $C:X\times X\to 2^X$ by $C(x,y) := I_D(x,y)$. 

By Lemma \ref{lem: cylinders} and Observation \ref{obs: geodesic neighborhood}, $C(x,y)$ are $\theta$-cylinders for $\theta=Dd+\delta$. 
It is clear from the definition that this choice of cylinders is $\Aut(X)$-invariant and inversion invariant. 
It remains to show that it is $(k,R)$-uniformly stable.

% Let $Z=I_D(x,z)$ and let $W=I_D(x,y)\cap I_D(x,y)$. The hyperplanes of $Z$ which are not in $W$ belong to $\PHyp_{D,\rho}(x,y) - \PHyp_{D,\rho}(x,z)$. By Lemma \ref{lem: bounded hyperplane difference}, there are at most $M$ such hyperplanes.

Set $R=5Dd$.
Let $$ P=\{m\} \cup \bigcup \pi_{I(x,y)} (h)$$ where $m=m(x,y,z)$, and the union ranges over all $h \in \PHyp_{D,\rho-Dd}(x,y) - \PHyp_{D}(x,z)$. By Observation \ref{obs: bounded projection} and Proposition \ref{prop: bounded hyperplane difference},  $P$ has at most $k=k(d,\delta)$ points. 
Let $\calB = \bigcup_{p\in P} B(p,R)$ be the union of the $R$-balls with centers in $P$.
By symmetry it suffices to show that $C(x,z)  - C(x,y) \cap B(x,\rho) \subseteq \calB$.

Assume $w\in (C(x,z) - C(x,y)) \cap B(x,\rho) - B$ where $B=B(m,R)$ is the ball of radius $R$ around $m$. Let $[x,z]$ be a geodesic passing through $m$. Since $C(x,z)$ is a $\theta$-cylinder, there exists a point $w'\in [x,z]$ such that $d(w,w')\le \theta$. Since $w\in B(x,\rho)$ and $w\notin B(m,R)$ we have that $d(x,w') \le  \rho - R+\theta$, and in particular $w'$ belongs to $C(x,y)$.

Let $w'' = \pi_{I(x,y)}(w)$. Since $w'\in C(x,y)$ we have $d(w,w'')\le d(w,w')\le \theta$, and it follows that $d(x,w'') \le d(x,w')+d(w',w)+d(w,w'') \le \rho - R+3\theta$.

Now, there exists $h\in \PHyp_{D}(x,y) - \PHyp_{D}(x,z)$ such that $w\in h$. By Observation \ref{obs: bounded projection}, $\diam(\pi_{I(x,y)}(h))\le Dd$. Since $w''\in \pi_{I(x,y)}(h)$, $$\dmax(x,\pi_{I(x,y)}(h))\le d(x,w'') + Dd \le  \rho - R+3\theta +Dd \le \rho-Dd.$$  Hence $h\in \PHyp_{D,\rho-R}(x,y) - \PHyp_{D,\rho}(x,z)$ and $w'' \in P$.
Since $d(w,w'')\le \theta = Dd \le R$ we get that $w\in B(w'',R) \subseteq \calB$ as desired.
\end{proof}

\paragraph{Stable cylinders for cubulated groups.} As discussed in the introduction, stability of cylinders does not depend on the space on which $G$ acts properly and cocompactly.

\begin{proposition}\label{prop: stable cylinders under qi}
Let $G$ be a hyperbolic group. Let $X,X'$ be graphs on which $G$ acts properly and cocompactly. The space $X$ admits $G$-globally stable cylinders if and only if $X'$ does.
\end{proposition}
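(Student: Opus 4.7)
The plan is to establish quasi-isometry invariance by transferring the cylinder structure along a $G$-equivariant quasi-isometry. By the \v{S}varc--Milnor lemma, both $X$ and $X'$ are $G$-equivariantly quasi-isometric to the Cayley graph $\Cay(G,S)$ for any finite symmetric generating set $S$; composing these identifications yields a $G$-equivariant $(L,A)$-quasi-isometry $f\colon X\to X'$ with $G$-equivariant quasi-inverse $\bar f\colon X'\to X$. (Finite point-stabilizers on $X$ and $X'$ obstruct strict equivariance only mildly, and are handled by factoring through $\Cay(G,S)$, on which $G$ acts freely.) Given $G$-invariant, $(k,R)$-stable $\theta$-cylinders $C$ on $X$, I would define
\[
C'(x', y') := N_\sigma\bigl(f(C(\bar f(x'), \bar f(y')))\bigr) \,\cup\, \bigcup_{\gamma\colon x'\to y'} \gamma,
\]
where the union ranges over all geodesics $\gamma$ in $X'$ from $x'$ to $y'$ and $\sigma=\sigma(L,A,\theta,\delta,\delta')$ is a sufficiently large constant.

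For the $\theta'$-cylinder property, $f(C(\bar f(x'), \bar f(y')))$ lies in a bounded neighborhood of the $(L,A)$-quasi-geodesic $f([\bar f(x'),\bar f(y')])$, whose endpoints are within $A$ of $x', y'$; by the Morse Lemma for hyperbolic spaces this quasi-geodesic is in a bounded neighborhood of every geodesic $[x',y']$, while the union of geodesics is automatically contained in the $\delta'$-neighborhood of any fixed one by hyperbolicity. Inversion invariance and $G$-invariance of $C'$ are then immediate from the $G$-equivariance of $f,\bar f$, the corresponding properties of $C$, and the fact that $G$ permutes geodesics of $X'$.

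The critical step is $(k',R')$-stability. For $x', y', z' \in X'$, set $\bar x = \bar f(x')$, $\bar y = \bar f(y')$, $\bar z = \bar f(z')$, $\bar\rho = (\bar y . \bar z)_{\bar x}$, and $\rho'=(y'.z')_{x'}$; stability of $C$ provides balls $\bar B_1,\ldots,\bar B_k$ of radius $R$ on which $C(\bar x,\bar y)$ and $C(\bar x,\bar z)$ coincide inside $B(\bar x,\bar\rho)$. Pushing forward through $f$ and thickening by $\sigma$, and using the elementary inclusion $N_\sigma(f(U))\triangle N_\sigma(f(V))\subseteq N_\sigma(f(U\triangle V))$, the sets $N_\sigma(f(C(\bar x,\bar y)))$ and $N_\sigma(f(C(\bar x,\bar z)))$ agree within $B(x',\rho'-E)$ outside $\bigcup_i N_\sigma(f(\bar B_i))$, where each piece has diameter at most $2LR+2A+2\sigma$ (hence fits in a single $R'$-ball), and $E$ absorbs the bounded quasi-isometry distortion $|\rho'-\bar\rho|$ of the Gromov product in hyperbolic spaces. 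The geodesic portions of $C'(x',y')$ and $C'(x',z')$ agree on $B(x',\rho'-O(\delta'))$ by the fellow-traveling property, and any residual discrepancy near the median $m(x',y',z')$ is absorbed by one further $R'$-ball centered there.

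The main obstacle is careful bookkeeping of the quasi-isometry errors --- most notably the bounded gap $|\rho'-\bar\rho|$, the interaction of $N_\sigma$-thickening with $f$-images and with the truncation at $B(x',\rho')$, and the passage of ``agreement outside $k$ balls'' from $X$ to $X'$ --- so as to guarantee that the final parameters $k', R'$ depend only on $L, A, k, R, \theta, \delta$ and are uniform in the triple $(x',y',z')$. A secondary technicality is enforcing strict (not merely coarse) $G$-equivariance of $f, \bar f$ in the presence of nontrivial but finite point-stabilizers, handled as above via the Cayley graph.
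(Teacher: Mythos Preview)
Your overall strategy---push the cylinders across a $G$-equivariant quasi-isometry and thicken---is the same as the paper's, but two of your stated justifications are incorrect.

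First, the assertion that $|\rho'-\bar\rho|$ is bounded is false: Gromov products are only preserved up to a \emph{multiplicative} factor under quasi-isometries (already on $\bbR$ the map $t\mapsto 2t$ doubles every Gromov product). Consequently your constant $E$ cannot absorb the discrepancy between $B(x',\rho')$ and the $f$-image of $B(\bar x,\bar\rho)$; a point $p\in B(x',\rho')$ close to $f(u)$ with $u\in C(\bar x,\bar y)$ may well have $d(\bar x,u)\gg\bar\rho$, so the $X$-stability hypothesis says nothing about $u$. The repair is to argue via the \emph{median} rather than the radius: coarse medians are preserved by quasi-isometries of hyperbolic spaces, so $f(\bar m)$ lies within a uniform constant of $m'=m(x',y',z')$. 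If $u\in C(\bar x,\bar y)$ lies past $\bar m$ along $[\bar x,\bar y]$, then $f(u)$ lies (up to Morse and endpoint constants) past $m'$ along $[x',y']$, hence outside $B(x',\rho'-C)$; thus any discrepancy inside $B(x',\rho')$ arising from such $u$ is confined to a single ball around $m'$, while discrepancies with $d(\bar x,u)\le\bar\rho$ are handled by the $X$-stability and land in the $k$ pushed-forward balls.

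Second, factoring through $\Cay(G,S)$ does not produce a strictly $G$-equivariant map $X'\to X$: a point $x'$ with nontrivial finite stabiliser $H=\Stab_G(x')$ would have to map to an $H$-fixed vertex of $\Cay(G,S)$, and there are none. The paper sidesteps this by using a set-valued equivariant map $\phi\colon X\to 2^{X'}$ (sending each orbit representative to a bounded $\Stab$-invariant set) and then defining $C'(x',y')$ as a union over \emph{all} $\phi$-preimages of $x'$ and $y'$; this makes $G$-invariance of $C'$ automatic. You should either adopt that device or otherwise explain how to obtain genuine equivariance.
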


\begin{proof}
Assume $X$ has $G$-globally stable cylinders.

Let $F$ be a finite fundamental domain for $G\actson X$. Define the map $\phi:X \to 2^{X'}$ by sending each $x\in F$ to a non-empty set $\phi(x)$ of diameter $\diam(\phi(x))\le D$ stabilized by the finite group $\Stab_G(x)$, and extend to $X$ $G$-equivariantly.
By the Milnor-Svarc Lemma the coarse map $\phi$ is a quasi-isometry. By replacing $\phi(x)$ by $N_r(\phi(x))$ for some large enough $r$, we assume that $\phi$ is ``surjective'' on $X'$ in the sense that for all $x'\in X'$ there exists $x\in X$ such that $x'\in \phi(x)$. 

By the Morse Lemma for $X'$, let $R$ be the constant such that $N_R(\phi([x,y]))$ contains any geodesic between $x'\in \phi(x)$ and $y'\in \phi(y)$. Define the cylinder  $C'(x',y')$ for $x',y'\in X'$  by $$C'(x',y') = N_R(\bigcup\{ \phi(z)\in X' \mid \forall x,y,z : x'\in \phi(x), y' \in \phi(y), z\in C(x,y)\}).$$
It is straightforward to verify that $C'(x',y')$ are $G$-invariant globally stable cylinders for $X'$.
\end{proof}

\bibliographystyle{plain}
\bibliography{biblio}
\end{document}